\documentclass[12pt,a4paper,reqno]{amsart}
\usepackage[marginratio=1:1,totalwidth=15.75cm,totalheight=22.275cm]{geometry}
\usepackage[utf8]{inputenc}
\usepackage{amsmath,amssymb,amsthm}
\usepackage[alphabetic,nobysame]{amsrefs}
\usepackage[shortlabels]{enumitem}
\usepackage{graphicx}
\usepackage{csquotes}

\usepackage[dvipsnames]{xcolor}
\usepackage[colorlinks=true,linkcolor=blue!60!black,citecolor=teal!80!black,urlcolor=RoyalBlue]{hyperref}
\usepackage{tikz-cd}
\usepackage{amscd}

\numberwithin{equation}{section}

\usepackage{soul}





\setlength{\parskip}{0.1cm}
\def\Cx{\mathbb{C}}

\renewcommand{\hat}{\widehat}

\DeclareMathOperator{\inj}{inj}

\newcommand{\DD}{\mathbb{D}}

\DeclareMathOperator{\GO}{GO}

\theoremstyle{plain}
\newtheorem{thm}{Theorem}[section]
\newtheorem{lemma}[thm]{Lemma}

\newtheorem{prop}[thm]{Proposition}

\newtheorem{thmmain}{Theorem}

\theoremstyle{definition}
\newtheorem{defi}[thm]{Definition}

\theoremstyle{remark}


\title[Baker domains and disappearing orbits]{Baker domains and orbits disappearing to infinity}

\author[G. R. Ferreira]{Gustavo R. Ferreira}

\address{ Centre de Recerca Matemàtica\\ Bellaterra\\ Catalonia\\ Spain\\
	\textsc{\newline \indent 
		\href{https://orcid.org/0000-0002-7330-0018%
		}{\includegraphics[width=1em,height=1em]{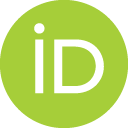} {\normalfont https://orcid.org/0000-0002-7330-0018}}
}}
\email{grodrigues@crm.cat}

\thanks{This project was supported by the European Union's Horizon Europe research and innovation programme under the Marie Skłodowska-Curie Grant Agreement No. 101208256}
\date{\today}

\begin{document}
\begin{abstract}
We study attracting orbits escaping to infinity in natural families of transcendental entire functions. We show that, if an attracting fixed point escapes to infinity while its multiplier tends to one, then the limiting function has a doubly parabolic Baker domain. Conversely, we show that any function with an invariant doubly parabolic Baker domain can be approximated locally uniformly by functions in its quasiconformal equivalence class having an attracting fixed point whose multiplier tends to one.

\end{abstract}
\maketitle

\section{Introduction}
In the theory of dynamical systems, a \textit{bifurcation} refers to a change in the qualitative behaviour of a system as its parameters are varied \cite{Kuz98}. The study of the various types of bifurcations is called \textit{bifurcation theory}, and it is often associated to understanding the system's periodic orbits and how they move through parameter space.

In holomorphic dynamics, a system takes the form of a complex-analytic function $f\colon S\to S$ being iterated, where $S$ is a Riemann surface. It was first observed by Fatou and Julia in the 1920s that $S$ can be divided into an open set where the iterates $\{f_n\}_n$ form a normal family in the sense of Montel, and a perfect set where the dynamics are chaotic in the sense of Devaney. The former is known as the \textit{Fatou set}, and denoted $F(f)$; the latter is called the \textit{Julia set}, $J(f)$. Both sets are completely invariant under $f$, meaning that if $U$ is a connected component of $f$ (henceforth, a \textit{Fatou component}) then $f(U)$ belongs to another Fatou component. If there exist $m > n \geq 0$ such that $f^m(U)\cap f^n(U)\neq\emptyset$, $U$ is said to be \textit{(pre-)periodic}; otherwise it is a \textit{wandering domain}. A \emph{periodic orbit} takes the form of a point $z\in S$ such that $f^n(z) = z$ for some $n\in\mathbb{N}$, and, when talking about parameter spaces, it is common to consider families of holomorphic maps $f_\lambda\colon S\to S$, where $\lambda$ moves in some complex manifold $M$, and $f_\lambda$ depends holomorphically on $\lambda$.

With this setup, many deep results in the area have been proved for rational maps by considering how periodic points move through both the phase and parameter spaces -- see, for instance, \cites{Shi87,McM00,Lev10}. The most celebrated and widely used application of this principle is probably Ma\~n\'e, Sad, and Sullivan's \cite{MSS83} use of analytic continuations of repelling periodic points and the $\lambda$-lemma to show density of structural stability for rational maps.

When put in the context of transcendental entire functions, one new phenomenon appears: periodic orbits \textit{disappearing} (or \textit{escaping}) \textit{to infinity}, i.e., a periodic orbit $\{z_1(t), z_2(t), \ldots, z_p(t)\}$ of the functions $f_t$ (all depending continuously on $t\in[0, 1]$) such that $f_t\to f$, a non-constant entire function, as $t\searrow 0$, but $z_i(t)\to\infty$ as $t\searrow 0$ for $1\leq i\leq p$. In families of \textit{finite-type} entire functions (i.e. those with only finitely many singular values), it was shown by Eremenko and Lyubich \cite{EL92}*{Theorem 2} that there are no orbits disappearing to infinity, allowing them to extend the results of Ma\~n\'e, Sad, and Sullivan to this setting. In the finite-type meromorphic context, Astorg, Benini, and Fagella \cite{ABF21} showed that, while periodic orbits \textit{can} escape the domain, this phenomenon is closely tied to a new kind of bifurcation involving \textit{virtual cycles}: asymptotic values whose orbit eventually lands on a pole. More precisely, they showed that if there is an attracting orbit $\{z_1(t), \ldots, z_p(t)\}$ such that $z_p(t)\to \infty$ as $t\searrow 0$ while the orbit's multiplier goes to zero, then one must have $z_1\to v$, an asymptotic value of $f$, and $z_{p-1}\to w$, a pole of $f$, creating the ``periodic orbit'' $v\mapsto \cdots \mapsto w\mapsto \infty \text{ ``$\mapsto v$''}$. They also showed that any map with a virtual cycle can be accessed within its parameter family by a path of maps with an attracting cycle tending to the virtual cycle while its multiplier tending to zero, showing that virtual cycles play a similar role to centres of hyperbolic components in the so-called ``shell components'' introduced in \cite{FK21}.

In this paper, we explore a new bifurcation involving periodic orbits disappearing to infinity in entire functions of infinite type. We show, in particular, that such orbits are closely tied to \textit{doubly parabolic Baker domains} (see Subsection \ref{ssec:BDs} for a definition): periodic Fatou components where orbits escape to the essential singularity at infinity in a similar way to orbits approaching a parabolic cycle.

Before we state our results, however, we must clarify what parameter space we are working in. The most ``general'' yet ``natural'' parameter space for an entire function $f$ is its \textit{quasiconformal equivalence class} $M_f$, the set of entire functions of the form $g = \psi\circ f\circ\varphi^{-1}$, where $\psi$ and $\varphi$ are quasiconformal homeomorphisms of $\Cx$. First studied by Eremenko and Lyubich \cite{EL92} in the context of finite-type maps, it has many properties that commend it as a general parameter space (see e.g. \cites{Rem09,ER15,ABF21}), and it admits the structure of a complex manifold (of possibly infinite dimension) for almost any entire function; see \cite{FvS25}*{Theorem A}. Holomorphic slices and submanifolds of $M_f$ are called \textit{natural families}. With this in mind, we can now state our results.

First, we show that any attracting fixed point escaping to infinity while its multiplier tends to one gives rise to a doubly parabolic Baker domain. This can be thought of as a ``Baker domain'' version of \cite{FK21}*{Theorem C}.
\begin{thmmain}\label{thm:escape}
Let $f$ be a transcendental entire function. Let $f_{\lambda(t)}$, $t\in[0, 1]$, be a path in $M_f$, with $f_{\lambda(t)}\to f$ as $t\searrow 0$.  Assume $f_{\lambda(t)}$ has a fixed point $z_t$ disappearing to infinity for $t = 0$. Assume that:
\begin{itemize}
    \item $z_t$ is attracting for $t\in (0, 1]$;
    \item There exists a singular value $v_t$ of $f_t$ in the basin of $z_t$ and such that the path $t\mapsto v_t$ is continuous for $t\in(0, 1]$;
    \item The multiplier $\rho_t$ of $z_t$ satisfies $\rho_t\nearrow 1$ horocyclically\footnote{See Definition \ref{def:horo}.} as $t\to 0$.
\end{itemize}
Then, $f$ has a doubly parabolic Baker domain.
\end{thmmain}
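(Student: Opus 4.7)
The plan is to analyze the attracting basins $A_t \ni v_t, z_t$ of $f_t$ via normalized Riemann maps, pass to a limit as $t \searrow 0$, and identify the resulting Fatou component of $f$ as a doubly parabolic Baker domain. The central idea is to normalize using the finite marked point $v_t$ rather than the escaping fixed point $z_t$, so that the conformal data survives in the limit.

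My first step is to produce a bounded subsequential limit $v_0 = \lim v_{t_k}$. Continuity of $v_t$ on $(0,1]$ together with $f_t \to f$ locally uniformly and the continuity of the singular set in the manifold $M_f$ of \cite{FvS25} prevents $v_t$ from escaping to $\infty$, so $v_0$ is a finite singular value of $f$. I next claim that $f^n(v_0) \to \infty$. Otherwise the orbit accumulates on a bounded periodic cycle $\Gamma$ of $f$; since $v_0$ is attracted to $\Gamma$, its multiplier has modulus at most $1$, so $\Gamma$ is attracting, parabolic, or on the boundary of a Siegel disc. In each case a holomorphic motion / quasiconformal-stability argument inside $M_f$ continues $\Gamma$ to a nearby bounded periodic cycle of $f_t$ that captures the singular value $v_t$, contradicting the fact that $v_t$ is attracted to $z_t \to \infty$. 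Hence $v_0$ lies in an invariant Fatou component $U$ of $f$ with $f^n|_U \to \infty$.

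Next I compare $U$ with $A_t$ via Carathéodory kernel convergence. Take Riemann maps $\phi_t \colon \DD \to A_t$ normalized by $\phi_t(0) = v_t$, $\phi_t'(0) > 0$; the invariant Baker domain $U$ is simply connected, and $\phi_t \to \phi$ locally uniformly for a Riemann map $\phi \colon \DD \to U$. The induced inner functions $g_t \defeq \phi_t^{-1} \circ f_t \circ \phi_t$ are self-maps of $\DD$ and converge to $g \defeq \phi^{-1} \circ f \circ \phi$, also a self-map of $\DD$. The fixed points $\zeta_t = \phi_t^{-1}(z_t)$ of $g_t$ have multiplier $\rho_t$; since $\phi_t \to \phi$ locally uniformly and $z_t$ leaves every compact subset of $\Cx$, the points $\zeta_t$ must leave every compact subset of $\DD$, so (after extracting) $\zeta_t \to \xi \in \partial\DD$. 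This identifies $\xi$ as the Denjoy--Wolff point of $g$ on $\partial\DD$, and hence $U$ as an invariant Baker domain of $f$.

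The final step upgrades ``Baker'' to ``doubly parabolic''. Conjugate $g$ to the half-plane model $\tilde g \colon \mathbb{H} \to \mathbb{H}$ with Denjoy--Wolff point at $\infty$ via a Möbius map sending $\xi \to \infty$, transporting the interior fixed points to $w_t \to \infty$. Near each $w_t$ one has the linear model $\tilde g_t(z) \approx \rho_t z + (1-\rho_t) w_t$, so the affine constant $(1-\rho_t) w_t$ controls the limiting translation. The horocyclic condition on $\rho_t$ from Definition~\ref{def:horo} is precisely what forces this constant to remain bounded and to lie in a non-tangential cone as $t \to 0$, producing a limit of the form $\tilde g(z) = z + c$ with $c$ real (rather than purely imaginary), which is the defining dynamical feature of a doubly parabolic Baker domain. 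I expect the main obstacle to be precisely this last translation between interior horocyclic convergence of multipliers and the boundary-derivative classification of $g$; the previous steps are guided by fairly standard normal-families and quasiconformal-stability arguments, whereas this final step is the dynamical heart of the theorem.
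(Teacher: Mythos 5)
Your proposal takes a genuinely different route from the paper. The paper constructs an $f$-invariant continuum $C$ joining the limiting singular value $v$ to $\infty$ using the linearising coordinates of $z_t$, then uses hyperbolic-geometry estimates (Lemma~\ref{lem:harnack}, Lemma~\ref{lem:delta}, the injectivity radius, and the geometric convergence from Lemma~\ref{lem:horo}) to get a uniform lower bound on $\dist(z,\partial U_t)$ along $C$, and finally reads off the doubly parabolic classification from the quotient surfaces $U_t/f_t$ degenerating to a punctured cylinder. You instead attempt a Riemann map / inner function argument via Carath\'eodory kernel convergence of the basins $A_t$ normalised at $v_t$. That is a reasonable alternative framework, but as written it has several genuine gaps, and the most serious ones land exactly where the paper has to work hardest.

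First, you assert that ``$v_0$ lies in an invariant Fatou component $U$ of $f$,'' but this does not follow from $f^n(v_0)\to\infty$; \emph{a priori} $v_0$ could lie in $J(f)$. The paper handles the analogous issue with an analytic-continuation-of-repelling-cycles argument, and that step cannot be skipped.

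Second, and more centrally, the statement ``$\phi_t\to\phi$ locally uniformly for a Riemann map $\phi\colon\DD\to U$'' is precisely Carath\'eodory kernel convergence, and you have given no reason for it to hold. The normalisation $\phi_t(0)=v_t$, $\phi_t'(0)>0$ does not prevent $\phi_t'(0)\to 0$: if $\dist(v_t,\partial A_t)\to 0$ the limit is a constant, not a Riemann map. Ruling this out requires a uniform lower bound on the inradius of $A_t$ at $v_t$ (and, more generally, along the whole orbit of $v_t$), which is the content of the paper's second step via Lemma~\ref{lem:delta} and the injectivity radius of the degenerating tori. Your proposal uses the horocyclic hypothesis only at the very end, whereas the paper needs it already here to get the geometric lower bounds; without it your $A_t$ could simply evaporate. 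You would also need to show the kernel is all of $U$ (so that $g$ is really the inner function of the Baker domain and not of a proper subdomain), which is another omitted step.

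Third, you yourself flag the final step — translating horocyclic convergence of $\rho_t$ into the zero-step (doubly parabolic) classification of $g$ — as the ``main obstacle,'' and indeed the sketch via the affine constant $(1-\rho_t)w_t$ is heuristic: you have no control on how $w_t\to\infty$ relative to $\rho_t\to 1$, so there is no reason $(1-\rho_t)w_t$ converges at all, let alone to a real constant. The paper instead invokes Lemma~\ref{lem:horo} (geometric convergence of the quotient tori to a cylinder) and the quotient-surface characterisation of doubly parabolic Baker domains, which circumvents the need to control $w_t$ directly. To make your final step rigorous you would essentially need to reprove Lemma~\ref{lem:horo} in the half-plane model, so it is not actually a shortcut. (One small point in your favour that you should still justify: you use that $U$ is simply connected; this does hold for periodic Fatou components of entire functions by Baker's theorem, but it is not automatic and should be cited.)
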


Our other result is a sort of converse to Theorem \ref{thm:escape}, in the spirit of \cite[Theorem~B]{ABF21}.
\begin{thmmain}\label{thm:perturb}
Let $f$ be a transcendental entire function with an invariant doubly parabolic Baker domain $U$. Then, there exists a path $f_{\lambda(t)}\in M_f$, $t\in[0, 1/2]$, such that:
\begin{enumerate}[label={\em (\arabic*)}]
    \item $f_{\lambda(0)} = f$;
    \item For $t\in (0, 1/2]$, $f_{\lambda(t)}$ has an attracting fixed point $z_t$ with multiplier $\rho_t$, and:
    \begin{enumerate}[label={\em (\alph*)}]
        \item $z_t$ disappears to infinity at $t = 0$;
        \item $|\rho_t|\nearrow 1$ as $t\to 0$.
    \end{enumerate}
\end{enumerate}
\end{thmmain}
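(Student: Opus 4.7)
The plan is a quasiconformal surgery inside the Baker domain $U$, in the spirit of \cite{ABF21}*{Theorem B}, with the doubly parabolic Baker domain playing the role of the virtual cycle there. Concretely, I would post-compose $f$ by a small, compactly supported quasiconformal self-map of $\Cx$ that identifies two consecutive orbit points $z_t^*$ and $f(z_t^*)$ deep inside $U$, forcing $z_t^*$ to become a fixed point of the resulting quasiregular map with prescribed multiplier; one then straightens via the measurable Riemann mapping theorem to obtain an entire map in $M_f$.

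The doubly parabolic nature of $U$ implies that the hyperbolic step $d_U(z, f(z))$ tends to zero as $z\to\infty$ along orbits in $U$. Consequently, picking $z_t^*\in U$ with $z_t^*\to\infty$ as $t\searrow 0$ (for instance along a single orbit $f^{n_t}(z_0)$ with $n_t\to\infty$), the ratio $|f(z_t^*)-z_t^*|/\dist(z_t^*, \partial U)$ also tends to zero by the standard comparison between hyperbolic and Euclidean distances in simply connected domains. For each $t$ I choose a Euclidean disc $D_t$ centred at $z_t^*$ with radius comparable to $\dist(z_t^*, \partial U)/2$, so that $D_t\subset U$ and $f(z_t^*)\in D_t$ with small relative displacement. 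Then I would construct a quasiconformal $\alpha_t\colon\Cx\to\Cx$ which equals the identity outside $D_t$, sends $f(z_t^*)$ to $z_t^*$, is holomorphic on a neighbourhood of $f(z_t^*)$ with derivative $\rho_t/f'(z_t^*)$ for a prescribed $\rho_t\in\DD$ with $|\rho_t|\nearrow 1$, and has dilatation $\|\mu_{\alpha_t}\|_\infty\to 0$. The quasiregular map $g_t\defeq\alpha_t\circ f$ then fixes $z_t^*$ with multiplier $\rho_t$, coincides with $f$ outside $f^{-1}(D_t)$, and has a compactly supported Beltrami coefficient of small norm.

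By a standard surgery argument one constructs a $g_t$-invariant Beltrami coefficient $\sigma_t$ by spreading $\mu_{\alpha_t}$ along the grand orbit of $g_t$ (for $|z_t^*|$ large the iterates do not re-enter $D_t$, so the spreads are disjoint and $\|\sigma_t\|_\infty=\|\mu_{\alpha_t}\|_\infty$). Applying the measurable Riemann mapping theorem to $\sigma_t$ yields a quasiconformal $\psi_t\colon\Cx\to\Cx$, normalized to converge to $\mathrm{id}$ locally uniformly as $t\to 0$, such that $f_t\defeq\psi_t\circ g_t\circ\psi_t^{-1}$ is holomorphic, hence entire (it agrees with an entire function outside a compact set). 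Since $f_t=(\psi_t\circ\alpha_t)\circ f\circ\psi_t^{-1}$, we have $f_t\in M_f$. The attracting fixed point $z_t\defeq\psi_t(z_t^*)$ has multiplier $\rho_t$ (the surgery is arranged so that $\sigma_t$ vanishes at $z_t^*$, making $\psi_t$ conformal there), and $z_t\to\infty$ as $t\to 0$; continuous dependence of the MRM solution on the Beltrami data gives the continuity of the path $t\mapsto f_t$ in $M_f$ and, after a reparametrization, the statement of the theorem.

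The hardest part is the geometric setup in the second paragraph: arranging simultaneously that $D_t$ is large enough to contain both $z_t^*$ and $f(z_t^*)$ yet small enough relative to $\dist(z_t^*,\partial U)$ that $\alpha_t$ can be built with small dilatation, and that the spread of $\mu_{\alpha_t}$ along the grand orbit of $g_t$ remains controlled (no re-entry, bounded total dilatation, avoidance of other singular orbits). Both issues rest on the doubly parabolic hypothesis: the vanishing of the hyperbolic step guarantees the first, and the escape to $\infty$ inside $U$ prevents the forward orbit from returning to $D_t$ for $|z_t^*|$ large. A secondary subtlety is verifying that $f_t$ is entire rather than merely meromorphic, which is automatic since $g_t$ agrees with the entire function $f$ outside a compact set.
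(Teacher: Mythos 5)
Your proposal takes a genuinely different route from the paper. The paper's proof is substantially more elementary: it works along a single $f$-invariant curve $\gamma\subset U$ tending to $\infty$, and simply defines
\[
h_t(z)=\frac{\gamma(1/t)}{f(\gamma(1/t))}\,f(z),
\]
which fixes $\gamma(1/t)$ and lies in $M_f$ because scalar multiplication is conformal (this is the same trick as the Fatou-function example $f_\alpha=\alpha f_1$ in the Introduction). The only analytic input is Lemma~\ref{lem:derivative} (that $|f'(\gamma(t))|\to 1$ along $\gamma$, proved via Cowen's classification of inner functions, the Schwarz--Pick lemma, and the doubly parabolic hypothesis through Theorem~\ref{thm:BDhypclass}), together with a short intermediate-value argument on $s\mapsto|h_t(\gamma(s))|$ and a final linear perturbation to make the fixed point strictly attracting. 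No measurable Riemann mapping theorem is invoked. Your surgery approach is in the right spirit and is the route one would naturally try when importing \cite{ABF21}*{Theorem~B}, but it is strictly heavier machinery for the same conclusion.

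Beyond being more complicated, your writeup has two genuine gaps. First, the parenthetical ``for $|z_t^*|$ large the iterates do not re-enter $D_t$, so the spreads are disjoint and $\|\sigma_t\|_\infty=\|\mu_{\alpha_t}\|_\infty$'' is false: after the modification, $z_t^*\in D_t$ is an attracting fixed point of $g_t$, so every forward $g_t$-orbit starting in $D_t$ re-enters $D_t$ infinitely often. What matters is how many times an orbit meets $\operatorname{supp}\mu_{g_t}=f^{-1}(\operatorname{supp}\mu_{\alpha_t})$, an annular region around $z_t^*$; near $z_t^*$ the $g_t$-step has size of order $(1-|\rho_t|)\cdot\dist(z_t^*,\partial U)$ while the annulus has width of order $\dist(z_t^*,\partial U)$, so the number of passes is of order $1/(1-|\rho_t|)\to\infty$. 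Hence $\|\sigma_t\|_\infty=\|\mu_{\alpha_t}\|_\infty$ does not follow; you would need either Shishikura's ``bounded number of passes'' variant with a quantitative balance between $1-|\rho_t|$ and the relative displacement $|f(z_t^*)-z_t^*|/\dist(z_t^*,\partial U)$, or a more careful choice of the interpolation region so that orbits cross the support only a bounded number of times. Second, to build $\alpha_t$ with small dilatation you prescribe derivative $\rho_t/f'(z_t^*)$ at $f(z_t^*)$, and for this to interpolate to the identity on $\partial D_t$ with small dilatation you need $\rho_t/f'(z_t^*)\to 1$; since you want $|\rho_t|\nearrow 1$, this silently uses $|f'(z_t^*)|\to 1$, which is exactly Lemma~\ref{lem:derivative} and is not stated or proved in your sketch. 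That lemma is where the doubly parabolic hypothesis does the real work, and your argument needs it just as much as the paper's does.
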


We point out that, unlike in \cite{ABF21}*{Theorem~B}, our result does not say that $f$ can be approximated by such a path $f_{\lambda(t)}$ for \textit{any} natural family containing $f$; it takes advantage of the fact that we are operating in $M_f$. In fact, a perfect copy of \cite{ABF21}*{Theorem~B} is not possible, as the example $f_\lambda(z) = e^{-z} + z + \lambda$ shows: the Fatou set of $f_1$, the famous Fatou's function, is composed of a single doubly parabolic Baker domain, and the same is true for any $f_\lambda$ for $\lambda\approx 1$. The attracting fixed point escaping to infinity only becomes apparent in the ``transversal'' family $f_\alpha = \alpha\cdot f_1$, $\alpha\in\Cx^*$.

We also note that this link between orbits disappearing to infinity and Baker domains provides a stark contrast to the case of finite-type meromorphic functions, where attracting periodic orbits disappearing to infinity give rise to a virtual cycle. Unlike in our case, finite-type meromorphic functions cannot have Baker domains \cite{RS99}, and so the attracting basin disappears to infinity along with the periodic orbit.

Finally, we remark on the condition $\rho_t\nearrow 1$, which shows up in both Theorems \ref{thm:escape} and \ref{thm:perturb}. While it seems somewhat arbitrary, there are no examples of periodic orbits of entire functions escaping to infinity without satisfying it -- in fact, a quick calculation shows that it is sometimes necessary:
\begin{prop}\label{prop:uc}
Let $\{f_\lambda\}_{\lambda\in M}$, where $M$ is some complex manifold, be a natural family. Assume that there exists $M > 0$ such that
\[ \frac{\partial f_\lambda}{\partial\lambda}(z) \leq M \text{ for all $z\in\Cx$ and $\lambda\in M$.} \]
Then, if a periodic orbit disappears to infinity, its multiplier tends to one.
\end{prop}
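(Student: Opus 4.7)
The plan is to differentiate the periodicity equation along the path and read the bound on $|1-\rho_t|$ directly from the hypothesis. I describe the fixed-point case ($p=1$) first, as it captures the main idea.

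Taking the path smooth, $f_{\lambda(t)}(z_t)=z_t$ for $t\in(0,1]$, with $z_t=z(\lambda(t))$ and $z(\lambda)$ a local holomorphic branch of the fixed point (well-defined so long as $\rho\neq 1$). Differentiating in $t$ yields
\[
(1-\rho_t)\,\dot z_t \;=\; \frac{\partial f_\lambda}{\partial\lambda}(z_t)\,\dot\lambda(t),
\]
where $\rho_t=f'_{\lambda(t)}(z_t)$. The hypothesis $|\partial f_\lambda/\partial\lambda|\le M$ together with $\|\dot\lambda(t)\|$ bounded on the compact interval $[0,1]$ gives $|1-\rho_t|\cdot|\dot z_t|\le C$ for some $C>0$ and all $t\in(0,1]$. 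If $|1-\rho_t|$ stayed a positive distance $\epsilon$ from $0$ on some neighbourhood $(0,\delta)$ of $t=0$, then $|\dot z_t|\le C/\epsilon$ there; integrating would keep $z_t$ in a compact set as $t\searrow 0$, contradicting the orbit disappearing to infinity. Hence $\rho_t\to 1$.

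For general period $p\ge 2$, I would iterate the chain rule around the orbit: from $\dot z_{i+1}=f'_{\lambda(t)}(z_i)\,\dot z_i+\partial_\lambda f(z_i)\,\dot\lambda(t)$ and $z_{p+1}=z_1$, solving the resulting cyclic linear recursion gives
\[
(1-\rho_t)\,\dot z_1 \;=\; \left[\sum_{i=1}^{p}\Bigl(\prod_{j=i+1}^{p} f'_{\lambda(t)}(z_j)\Bigr)\,\frac{\partial f_\lambda}{\partial\lambda}(z_i)\right]\dot\lambda(t),
\]
and an analogous identity starting from every $z_k$ on the orbit. The main obstacle is precisely the bounding step in the periodic case: unlike when $p=1$, the intermediate products $\prod_j f'_{\lambda(t)}(z_j)$ are \emph{a priori} uncontrolled, since the hypothesis bounds $\partial_\lambda f$ but not $f'$ at rapidly escaping points. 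I would address this by treating the $p$ identities as a system and exploiting that the full cyclic product of the $f'$'s equals $\rho_t$: either by choosing the cyclic shift that minimises the intermediate factor, or by summing the $p$ equations symmetrically, one expects an estimate of the form $|1-\rho_t|\cdot|\dot z_k|\le C$ for at least one $k$, from which the integration argument from the $p=1$ case closes the proof.
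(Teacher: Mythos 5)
Your approach mirrors the paper's: differentiate the periodicity equation, isolate $(1-\rho_t)$, and read off the needed estimate from the bound on $\partial f_\lambda/\partial\lambda$. For fixed points ($p=1$) your argument is correct and, if anything, a little more careful than the paper's: the integration step (``if $|1-\rho_t|\ge\epsilon$ on $(0,\delta)$ then $|\dot z_t|$ is bounded, so $z_t$ stays in a compact set'') is a cleaner justification than the paper's assertion that ``the left-hand side tends to infinity.''

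For $p\ge 2$ you have correctly put your finger on a genuine issue, and it is worth noting that the paper's own proof glosses over the very same point. Differentiating $f_\lambda^p(z_\lambda)=z_\lambda$ produces $\partial_\lambda(f_\lambda^p)$ on the right-hand side, and this expands via the chain rule into
\[
\partial_\lambda(f_\lambda^p)(z_1)\;=\;\sum_{i=1}^{p}\Bigl(\prod_{j=i+1}^{p} f'_\lambda(z_j)\Bigr)\frac{\partial f_\lambda}{\partial\lambda}(z_i),
\]
and the hypothesis bounds only $\partial f_\lambda/\partial\lambda$, not $f'_\lambda$ at the escaping orbit points. The paper's proof ends by saying that ``the partial derivative of $f_\lambda$ with respect to $\lambda$ is uniformly bounded,'' but the quantity actually appearing in its key formula is $\partial_\lambda (f_\lambda^p)$, which coincides with $\partial_\lambda f_\lambda$ only when $p=1$. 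So your objection identifies a real gap, not a misunderstanding.

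That said, your proposed fix does not close the gap. ``Choosing the cyclic shift that minimises the intermediate factor'' and ``summing the $p$ equations symmetrically'' are strategies, not arguments, and your own wording (``one expects an estimate'') acknowledges as much. The difficulty is concrete: for each fixed $t$ one can indeed pick a base index $k=k(t)$ for which the partial products are controlled (since $\prod_j f'_\lambda(z_j)=\rho_t$, at least one factor has modulus at most $|\rho_t|^{1/p}$), but $k(t)$ may jump around as $t$ varies, while the integration step in your $p=1$ argument requires a single index $k$ with $|\dot z_k|$ bounded on a whole interval $(0,\delta)$. Until that mismatch is resolved, the $p\ge 2$ case remains open in your write-up — and, arguably, in the paper's as well.
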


The structure of this paper is as follows. First, in Section \ref{sec:prelim}, we introduce the required background (and notation) for hyperbolic geometry, grand orbits, horocyclic convergence, and Baker domains. Sections \ref{sec:escape} and \ref{sec:perturb} prove Theorems \ref{thm:escape} and \ref{thm:perturb}, respectively, and Section \ref{sec:uniform} has a proof of Proposition \ref{prop:uc}.

\section*{Acknowledgements}
I'd like to thank Núria Fagella for many insightful discussions.

\section{Preliminaries}\label{sec:prelim}
\subsection{Hyperbolic geometry}\label{ssec:hypgeom}
We define the \textit{hyperbolic density} in the unit disc to be
\[ \rho_\DD(z) = \frac{2}{1 - |z|^2} \text{ for }z\in\DD. \]
The resulting conformal metric $\rho_\DD|dz|$ is a complete Riemannian metric with constant curvature $-1$, called the \textit{hyperbolic metric} of $\DD$. This metric is invariant under M\"obius self-maps of the disc, and so by the uniformisation theorem any Riemann surface uniformised by the unit disc admits a complete Riemannian metric of curvature $-1$, called its \textit{hyperbolic metric}; see e.g. \cites{KL07,BM06} for more details. We will also use the notation $d_S$ for the hyperbolic distance on a hyperbolic surface $S$, as well as $\ell_S$ for the hyperbolic length of a curve in $S$.

Of interest to us are basic estimates on the hyperbolic density of plane domains -- starting with the following ``Harnack-type'' inequality due to Benini \textit{et al.}
\begin{lemma}[\cite{BEFRS22}*{Lemma 4.1}]\label{lem:harnack}
Let $\Omega\subsetneq\Cx$ be a simply connected domain. Then,
\[ e^{-2d_\Omega(z, w)} \leq \frac{\rho_\Omega(z)}{\rho_\Omega(w)} \leq e^{2d_\Omega(z, w)} \text{ for all $z, w\in\Omega$}. \]
\end{lemma}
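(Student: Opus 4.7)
The statement is the exponential form of the Lipschitz-type bound
\[ \bigl|\log\rho_\Omega(z)-\log\rho_\Omega(w)\bigr|\leq 2\,d_\Omega(z,w), \]
and my plan is to prove it in this form by first establishing the pointwise estimate $|\nabla^{\mathrm{hyp}}\log\rho_\Omega(p)|\leq 2$ at every $p\in\Omega$, and then integrating along the hyperbolic geodesic from $z$ to $w$. Here $\nabla^{\mathrm{hyp}}$ denotes the gradient in the hyperbolic metric, whose hyperbolic norm is $|\nabla u|/\rho_\Omega$ for any real-valued $u$.

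For the pointwise bound, fix $p\in\Omega$ and choose a Riemann map $\phi\colon\DD\to\Omega$ with $\phi(0)=p$. The conformal change of variables $\rho_\Omega(\phi(\zeta))\,|\phi'(\zeta)|=\rho_\DD(\zeta)$ gives
\[ \log\rho_\Omega(\phi(\zeta))=\log\tfrac{2}{1-|\zeta|^2}-\log|\phi'(\zeta)|. \]
Applying $\partial_\zeta$ at $\zeta=0$, and using that $\partial_\zeta\log(1-|\zeta|^2)$ vanishes at the origin while $\partial_\zeta\log|\phi'|=\phi''/(2\phi')$, the chain rule yields
\[ (\log\rho_\Omega)_z(p)=-\frac{\phi''(0)}{2\,\phi'(0)^2}. \]
The Bieberbach inequality $|a_2|\leq 2$ applied to the schlicht map $\zeta\mapsto(\phi(\zeta)-p)/\phi'(0)$ gives $|\phi''(0)/\phi'(0)|\leq 4$. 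Combining with $\rho_\Omega(p)=2/|\phi'(0)|$ and the identity $|\nabla u|=2|u_z|$ for real-valued $u$ then gives $|\nabla\log\rho_\Omega(p)|\leq 2\rho_\Omega(p)$, i.e.\ $|\nabla^{\mathrm{hyp}}\log\rho_\Omega(p)|\leq 2$.

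The conclusion is then immediate. Parametrising the hyperbolic geodesic $\gamma$ from $z$ to $w$ by hyperbolic arc-length and integrating,
\[ \Bigl|\log\frac{\rho_\Omega(z)}{\rho_\Omega(w)}\Bigr|=\biggl|\int_0^{d_\Omega(z,w)}\frac{d}{dt}\log\rho_\Omega(\gamma(t))\,dt\biggr|\leq 2\,d_\Omega(z,w), \]
and exponentiation yields both halves of the stated inequality. The only non-elementary input is the classical Bieberbach bound $|a_2|\leq 2$; everything else is routine change-of-variables and integration, so this is where I expect the technical weight of the proof to sit. A more hands-on alternative would combine the Koebe quarter-theorem estimates $\tfrac{1}{2\dist(\cdot,\partial\Omega)}\leq\rho_\Omega\leq\tfrac{2}{\dist(\cdot,\partial\Omega)}$ with standard comparisons between hyperbolic distance and log-ratios of Euclidean distances to $\partial\Omega$, but the differential route above is cleaner and delivers the sharp constant $2$.
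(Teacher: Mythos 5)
Your proof is correct. Note that the paper itself does not prove this lemma; it is cited directly from \cite{BEFRS22}*{Lemma~4.1}, so there is no in-paper argument to compare against. Your route is the standard one and the computations are all sound: the conformal change-of-variables identity $\rho_\Omega(\phi(\zeta))|\phi'(\zeta)|=\rho_\DD(\zeta)$, the vanishing of $\partial_\zeta\log(1-|\zeta|^2)$ at the origin, the chain-rule identity $(\log\rho_\Omega)_z(p)=-\phi''(0)/(2\phi'(0)^2)$, the Bieberbach bound $|\phi''(0)/\phi'(0)|\leq 4$ via the schlicht normalisation, and the conversion $|\nabla u|=2|u_z|$ together give the pointwise estimate $|\nabla\log\rho_\Omega|\leq 2\rho_\Omega$. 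Integrating along a hyperbolic-arclength-parametrised geodesic (where $|\gamma'(t)|=1/\rho_\Omega(\gamma(t))$) then yields the Lipschitz bound and, after exponentiating, both inequalities with the sharp constant $2$. This is, to my knowledge, essentially the argument used in the cited source, so your proposal agrees with it in substance.
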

Lemma \ref{lem:harnack} can be used to prove the following estimate, which features in the proofs of \cite{Fer22}*{Theorem 5.1} and \cite{BEFRS24}*{Theorem 3.3}.
\begin{lemma}\label{lem:delta}
Let $\Omega\subsetneq\Cx$ be a domain, and let $\tilde\Omega$ denote its topological convex hull (i.e. the union of $\Omega$ and all its bounded complementary components). Let $\delta_{\tilde\Omega}(z) := \mathrm{dist}(z, \partial\tilde\Omega)$ for all $z\in \Omega$. Then,
\[ |z - w| \leq 2d_\Omega(z, w)e^{2d_\Omega(z, w)}\delta_{\tilde\Omega}(z) \text{ for any $z, w\in\Omega$.} \]
\end{lemma}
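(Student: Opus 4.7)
\textbf{Proof plan for Lemma \ref{lem:delta}.} The plan is to bound the Euclidean length of the $\Omega$-hyperbolic geodesic from $z$ to $w$ by converting back and forth between $\rho_\Omega$ and the Euclidean metric, using the simple connectivity of $\tilde\Omega$ to control the density pointwise.

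First, observe that $\tilde\Omega$ is simply connected (it is obtained from $\Omega$ by filling in the bounded complementary components, leaving at most one unbounded complementary component), and $\tilde\Omega \subsetneq \Cx$ since $\Omega \subsetneq \Cx$. Hence Lemma \ref{lem:harnack} applies to $\tilde\Omega$, and the Koebe $1/4$ theorem applied to a Riemann map onto $\tilde\Omega$ gives the standard one-sided estimate
\[ \rho_{\tilde\Omega}(z) \geq \frac{1}{2\delta_{\tilde\Omega}(z)}. \]
Since $\Omega \subseteq \tilde\Omega$, the monotonicity of the hyperbolic density under inclusion yields $\rho_\Omega(z') \geq \rho_{\tilde\Omega}(z')$ for every $z' \in \Omega$, and similarly $d_{\tilde\Omega}(z, z') \leq d_\Omega(z, z')$.

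Next, let $\gamma \subset \Omega$ be the $\Omega$-hyperbolic geodesic from $z$ to $w$. For $z' \in \gamma$, combining the inclusion-monotonicity and Lemma \ref{lem:harnack} applied in $\tilde\Omega$,
\[ \rho_\Omega(z') \geq \rho_{\tilde\Omega}(z') \geq \rho_{\tilde\Omega}(z) \, e^{-2 d_{\tilde\Omega}(z, z')} \geq \rho_{\tilde\Omega}(z) \, e^{-2 d_\Omega(z, w)}, \]
where in the last step I used that $d_\Omega(z, z') \leq d_\Omega(z, w)$ since $z'$ lies on the geodesic from $z$ to $w$.

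Finally, I estimate the Euclidean length of $\gamma$ by rewriting it as an integral against the hyperbolic density:
\[ |z - w| \leq \int_\gamma |dz'| = \int_\gamma \frac{\rho_\Omega(z') \, |dz'|}{\rho_\Omega(z')} \leq \frac{e^{2 d_\Omega(z, w)}}{\rho_{\tilde\Omega}(z)} \, d_\Omega(z, w) \leq 2\, \delta_{\tilde\Omega}(z) \, e^{2 d_\Omega(z, w)} \, d_\Omega(z, w), \]
which is the desired inequality. The only mild subtlety is that the Koebe lower bound for $\rho_{\tilde\Omega}$ requires $\tilde\Omega$ to be simply connected and a proper subset of $\Cx$; this is why we pass to the topological convex hull rather than working with $\rho_\Omega$ directly (for which no such lower bound need hold when $\Omega$ is multiply connected). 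No step looks to be a real obstacle; the main thing to be careful about is keeping the distances in the correct domain throughout the chain of inequalities.
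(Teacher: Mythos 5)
The paper does not actually prove Lemma~\ref{lem:delta}; it cites it to \cite{Fer22}*{Theorem 5.1} and \cite{BEFRS24}*{Theorem 3.3}, so there is no in-paper argument to compare against. Your proof is the natural one and is essentially correct: pass to $\tilde\Omega$ to get a Koebe lower bound on the density, compare $\rho_\Omega$ to $\rho_{\tilde\Omega}$ by inclusion-monotonicity, control the variation of $\rho_{\tilde\Omega}$ along the $\Omega$-geodesic $\gamma$ via Lemma~\ref{lem:harnack}, and integrate. The chain of inequalities is clean and each step is justified.

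There is, however, one false assertion worth flagging: you claim that $\tilde\Omega \subsetneq \Cx$ \emph{since} $\Omega\subsetneq\Cx$. This is not true in general. If every complementary component of $\Omega$ is bounded --- e.g.\ $\Omega = \Cx\setminus[0,1]$, or $\Omega = \Cx\setminus\overline{\DD}$ --- then $\tilde\Omega = \Cx$, and $\rho_{\tilde\Omega}$ does not exist, so the argument as written breaks down. The fix is easy: when $\tilde\Omega = \Cx$ one has $\partial\tilde\Omega = \emptyset$, so $\delta_{\tilde\Omega}(z) = +\infty$ and the asserted inequality is vacuous. You should insert this case split at the outset (``if $\tilde\Omega = \Cx$ there is nothing to prove, so assume $\tilde\Omega\subsetneq\Cx$'') rather than deriving $\tilde\Omega\subsetneq\Cx$ from $\Omega\subsetneq\Cx$, which does not follow. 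With that one-line correction the proof is complete.
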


We will also need the concept of injectivity radius:
\begin{defi}
Let $S$ be a hyperbolic surface, and let $p\in S$. The \textit{injectivity radius} of $S$ at $p$ is given by
\[ \inj_S(p) = \frac{1}{2}\min\{\ell_S(\gamma)\colon \text{$\gamma\subset S$ is a non-trivial closed loop passing through $p$}\}. \]
\end{defi}

\subsection{Grand orbits, quotient surfaces, and horocyclic convergence}
A more geometric approach to Fatou components is to look at the corresponding \textit{quotient surfaces}. We start with the \textit{grand orbit} of a point $z$, which (given a function $f$) is the set of all points whose orbit eventually intersects that of $z$:
\[ \GO(z) = \{w\in\Cx\colon \text{$f^m(z) = f^n(w)$ for some $m, n\in\mathbb{Z}$}\}; \]
the grand orbit of a set $S\subset \Cx$ is similarly defined as the union of the grand orbits of all its points. It is easy to see that $z\sim w\Leftrightarrow \GO(z) = \GO(w)$ defines an equivalence relation, called the \textit{grand orbit relation}.

With this in mind, let us understand the structure of $\GO(U)/f$ for a Fatou component $U$ of $f$. Following McMullen and Sullivan \cite{MS98}, we say that the grand orbit relation in $U$ is \textit{discrete} if $\GO(z)$ is a discrete (in $U$) set for all $z\in U$, and \textit{indiscrete} otherwise. The presence of normal forms in periodic Fatou components means that, if $U$ is periodic, either all points in $U$ have a discrete grand orbit, or none does (the situation is more complicated for wandering domains; see \cites{EFGP24,FFP25}).

Finally, let $U$ be a periodic Fatou component of $f$, for which the grand orbit relation is discrete. The quotient $S := \GO(U)/f$ carries a natural Riemann surface structure, which is determined by the corresponding normal form -- for instance, if $U$ is attracting, then $S$ is a complex torus, and if $U$ is parabolic, $S$ is a cylinder.

Now, in the attracting case, the precise geometry of the torus $S$ is determined by the multiplier $\rho$ of the corresponding attracting periodic orbit. If one considers $f$ as belonging to a parameter family $f_t$, $t\in[0, 1]$, with the corresponding multipliers $\rho_t$ satisfying $\rho_t\nearrow 1$ as $t\to 0$ (say), one sees the complex tori $S_t$ degenerating. The precise manner of this degeneration depends on how $\rho_t$ approaches one; we need the following definition:
\begin{defi}\label{def:horo}
The multipliers $\rho_t$ tend to one \textit{horocyclically} as $t\to 0$ if any horodisc $H\subset\DD$ anchored at one contains $\rho_t$ for all sufficiently small $t$. Alternatively, $\rho_t$ tends to one horocylically as $t\to 0$ if
\[ \Re\left({\frac{1}{1 - \rho_t}}\right)\to +\infty\text{ as $t\to 0$.} \]
\end{defi}

The importance of horocyclic convergence is the following fact (see \cite{McM00}*{p. 561}).
\begin{lemma}\label{lem:horo}
Let $U_t$ be the attracting basin of an attracting orbit with multiplier $\rho_t$ of the function $f_t$. If $\rho_t\nearrow 1$ horocyclically, then the quotient surfaces $\GO(U_t)/f_t$ converge geometrically to a cylinder.
\end{lemma}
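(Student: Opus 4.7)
My plan is to exhibit $\GO(U_t)/f_t$ explicitly as a family of complex tori via Koenigs linearization, and then to show that the horocyclic convergence $\rho_t\nearrow 1$ corresponds precisely to these tori developing a cusp in the moduli space, yielding geometric convergence to a flat cylinder.

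First, for each $t > 0$, Koenigs' linearization theorem provides a conformal coordinate $\phi_t$ near $z_t$ with $\phi_t(z_t) = 0$ and $\phi_t \circ f_t = \rho_t\cdot\phi_t$. The functional equation $\phi_t = \rho_t^{-n}(\phi_t\circ f_t^n)$ extends $\phi_t$ holomorphically to all of $\GO(U_t)$, and descends to a conformal equivalence of $\GO(U_t)/f_t$ with the torus $T_t \defeq \Cx^*/\langle z\mapsto \rho_t z\rangle$ (or a finite branched cover thereof if $\phi_t$ has critical values, which does not affect the geometric-convergence argument below). Via the logarithm, $T_t$ is biholomorphic to $\Cx/L_t$, with lattice $L_t = 2\pi i\,\mathbb{Z} + \alpha_t\,\mathbb{Z}$, where $\alpha_t = \log\rho_t$.

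Second, to track the degeneration, I would rescale the lattice so that the short generator $\alpha_t$ becomes $1$: setting $\tau_t \defeq -2\pi i/\alpha_t$ (sign chosen so that $\mathrm{Im}(\tau_t) > 0$), multiplication by $1/\alpha_t$ identifies $T_t$ with $\Cx/(\mathbb{Z} + \tau_t\,\mathbb{Z})$. A direct computation gives $\mathrm{Im}(\tau_t) = -2\pi\,\mathrm{Re}(1/\alpha_t)$, and the expansion $\log\rho_t = -(1-\rho_t) + O((1-\rho_t)^2)$ then yields $\mathrm{Im}(\tau_t) \sim 2\pi\,\mathrm{Re}(1/(1-\rho_t))$, which tends to $+\infty$ by the horocyclic convergence hypothesis (Definition \ref{def:horo}).

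Third, I would conclude with the classical observation that, as $\mathrm{Im}(\tau_t) \to +\infty$, the rescaled flat tori $\Cx/(\mathbb{Z} + \tau_t\,\mathbb{Z})$ converge in the pointed Gromov--Hausdorff sense (with basepoint $[0]$) to the flat cylinder $\Cx/\mathbb{Z}$: for any $R > 0$, once $\mathrm{Im}(\tau_t) > 2R$, the ball $B([0], R) \subset \Cx/\mathbb{Z}$ embeds isometrically into $\Cx/(\mathbb{Z} + \tau_t\,\mathbb{Z})$, because the additional identifications coming from $\tau_t\,\mathbb{Z}$ translate cylinder points by $\tau_t$, which sits at cylinder-distance at least $\mathrm{Im}(\tau_t) > 2R$ from $[0]$.

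The main obstacle is being careful about the metric and basepoint that make the phrase \emph{geometric convergence of Riemann surfaces} precise, since $T_t$ carries no natural hyperbolic metric: the flat metric inherited from $|dz|/|z|$ on $\Cx^*$ (equivalently $|dw|$ on the universal cover $\Cx$) is the natural choice, and the rescaling by $1/\alpha_t$ is what makes the cylinder emerge in the pointed limit. The analytical core is the short calculation relating horocyclic convergence to the cusp $\mathrm{Im}(\tau_t)\to\infty$; once this is in hand, the rest is standard.
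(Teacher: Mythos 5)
The paper does not prove Lemma~\ref{lem:horo}; it is taken as a citation from McMullen (p.~561 of \cite{McM00}), so there is no in-text argument to compare against. That said, your proof is correct and is, up to presentation, the standard one behind that reference.

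A few small points worth flagging. (i) Your calculation is right: with $\alpha_t=\log\rho_t=-(1-\rho_t)(1+O(1-\rho_t))$ one gets $1/\alpha_t=-1/(1-\rho_t)+O(1)$, so
$\mathrm{Im}(\tau_t)=-2\pi\,\mathrm{Re}(1/\alpha_t)=2\pi\,\mathrm{Re}\bigl(1/(1-\rho_t)\bigr)+O(1)\to+\infty$
precisely under horocyclic convergence; the $O(1)$ error does not affect the conclusion. (ii) Your isometric-embedding argument for pointed Gromov--Hausdorff convergence is correct, and since the flat tori are homogeneous, the choice of basepoint is immaterial \emph{for the torus itself}. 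The one place where you are glossing is the parenthetical ``(or a finite branched cover thereof\ldots, which does not affect the geometric-convergence argument below)'': the grand-orbit quotient of a full attracting basin is typically a branched cover, with branching/punctures dictated by the singular orbits inside the basin, and whether convergence of the base tori lifts to convergence of those covers does depend on the branch points and the basepoint remaining in a compact part of the degenerating torus. In the paper this is handled by passing to $\hat U_t=U_t\setminus\GO(S(f_t|_{U_t}))$ and tracking the marked singular value $v_t$; for the lemma as stated (torus degenerating to cylinder), your argument is complete, but the parenthetical should not be read as automatic if one wants the punctured version. (iii) A minor notational slip: the covering group interpretation is cleaner if you say the rescaled lattices $\mathbb{Z}+\tau_t\mathbb{Z}$ converge in the Hausdorff sense (on closed subgroups of $\Cx$) to $\mathbb{Z}$; this is equivalent to what you wrote but matches McMullen's formalism of geometric limits of covering groups more directly.
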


\subsection{Baker domains}\label{ssec:BDs}
A Baker domain of the entire function $f$ is a Fatou component where the iterates tend locally uniformly to infinity. It was shown by Cowen \cite{Cow81} and Baker and Pommerenke \cite{BP79} that there are three types of Baker domains (hyperbolic, simply parabolic, and doubly parabolic), which can be distinguished by their respective normal forms. Here, however, we are more concerned with how to differentiate them by the behaviour of the hyperbolic metric (see \cites{Kon99,BFJK15}):
\begin{thm}\label{thm:BDhypclass}
Let $U$ be an invariant Baker domain of the transcendental entire function $f$. Then:
\begin{itemize}
    \item $U$ is doubly parabolic if and only if $\lim_{n\to\infty} d_U(f^{n+1}(z), f^n(z)) = 0$ for some (and hence all) $z\in U$.
    \item $U$ is simply parabolic if and only if $\inf_{z\in U}\lim_{n\to\infty} d_U(f^{n+1}(z), f^n(z)) = 0$, but $\lim_{n\to\infty} d_U(f^{n+1}(z), f^n(z)) > 0$ for all $z\in U$.
    \item $U$ is hyperbolic if and only if $\inf_{z\in U}\lim_{n\to\infty} d_U(f^{n+1}(z), f^n(z)) > 0$.
\end{itemize}
\end{thm}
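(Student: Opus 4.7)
The plan is to invoke the Cowen/Baker--Pommerenke normal form theorem, which semiconjugates $(U,f)$ to one of three canonical Möbius models, and then to read off the asymptotic hyperbolic displacement $d_U(f^{n+1}z, f^n z)$ directly in each model. Specifically, there is an absorbing subdomain $V\subset U$ with $f(V)\subset V$ and $\bigcup_{n\ge 0} f^{-n}(V) = U$, together with a conformal isomorphism $\pi\colon V\to\Omega'\subset\mathbb{H}$ and a Möbius transformation $T$ of $\mathbb{H}$ preserving $\Omega'$, satisfying $\pi\circ f = T\circ\pi$. The three cases are: hyperbolic (with $T(z)=\lambda z$, $\lambda>1$, and $\Omega'$ a sector), simply parabolic (with $T(z)=z+1$ and $\Omega'=\mathbb{H}$), and doubly parabolic (with $T(z)=z+1$ and $\Omega'=\{\mathrm{Re}\,z>R\}$). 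The semi-conjugacy extends to all of $U$ via $\pi(z)=T^{-n}(\pi(f^n z))$ for $n$ so large that $f^n(z)\in V$.

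The next step is to compute $d_{\Omega'}(T^{n+1}w, T^n w)$ for $w=\pi(z)$ in each model. In the doubly parabolic case, the half-plane $\Omega'$ has hyperbolic density $1/(\mathrm{Re}\,z-R)$, so with $T^n w = w+n$ the displacement decays like $1/n\to 0$. In the hyperbolic case, $T$ is a hyperbolic isometry of $\mathbb{H}$ with translation length $\log\lambda>0$, so the displacement is exactly the constant $\log\lambda$. In the simply parabolic case, $d_{\mathbb{H}}(w+n,w+n+1)$ is a positive constant depending only on $\mathrm{Im}\,w$; it is positive at every $z\in U$ but can be made as small as desired by choosing $z$ whose $\pi$-image has large imaginary part, which captures exactly the infimum-versus-pointwise dichotomy in the statement. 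Since $\pi|_V$ is a hyperbolic isometry and $d_U\le d_V$ by Schwarz--Pick applied to the inclusion $V\hookrightarrow U$, these computations yield upper bounds on $d_U(f^{n+1}z, f^n z)$; the matching lower bounds follow in the hyperbolic and doubly parabolic cases from the fact that the extended semi-conjugacy $\pi\colon U\to\mathbb{H}$ is a holomorphic map into a hyperbolic target (hence non-expanding), and in the simply parabolic case from analysing the quotient cylinder $\GO(U)/f$, whose core curve has positive hyperbolic length.

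The main obstacle is to separate the simply and doubly parabolic cases, since they share the same Möbius normal form $T(z)=z+1$ and differ only in the shape of the absorbing domain $\Omega'\subset\mathbb{H}$: a strict half-plane inside $\mathbb{H}$ has hyperbolic density decaying to zero along the $T$-orbit, whereas all of $\mathbb{H}$ has density $1/\mathrm{Im}\,z$ invariant under $T$. Equivalently, the quotient surface $\GO(U)/f$ is a punctured disc in the doubly parabolic case but a cylinder of positive modulus in the simply parabolic case, and it is this geometric distinction — rather than any algebraic difference in the normal form — that yields the three behaviours in the theorem. Once this dichotomy is verified, the upper and lower bounds obtained from the model calculation combine to give the three-way characterisation.
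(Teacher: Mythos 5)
The paper does not prove Theorem \ref{thm:BDhypclass}: it is quoted from the literature, with citations to K\"onig and Bergweiler--Fagella--Jarque--Karpi\'nska. Those references do follow broadly the strategy you outline --- pass to the Cowen/Baker--Pommerenke model and read off the hyperbolic displacement there --- so your overall plan is the standard one. However, several concrete details in your write-up are incorrect, and one key geometric distinction is reversed.

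First, the model domains. In Cowen's theorem the target $\Omega$ is $\mathbb{H}$ with $T(z)=\lambda z$ in the hyperbolic case, $\mathbb{H}$ with $T(z)=z+1$ in the simply parabolic case, and $\Cx$ with $T(z)=z+1$ in the doubly parabolic case. Your claim that $\Omega'\subset\mathbb{H}$ in all three cases is wrong for the doubly parabolic type: $\{\operatorname{Re}z>R\}$ is not contained in the upper half-plane, and the whole point of the doubly parabolic type is that the model domain is the non-hyperbolic plane $\Cx$ rather than $\mathbb{H}$. Consequently, the lower-bound argument you invoke --- ``$\pi\colon U\to\mathbb{H}$ is holomorphic into a hyperbolic target, hence non-expanding'' --- is unavailable (and unnecessary) in the doubly parabolic case. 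You assign it to the hyperbolic and doubly parabolic cases; it should be the hyperbolic and simply parabolic cases, and it is precisely what gives $\lim_{n} d_U(f^{n+1}z,f^nz)>0$ for every fixed $z$ when the type is simply parabolic.

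Second, the quotient surfaces. You write that $\GO(U)/f$ ``is a punctured disc in the doubly parabolic case but a cylinder of positive modulus in the simply parabolic case.'' This is exactly backwards. The classification reproduced in the paper immediately after Theorem \ref{thm:BDhypclass} (Fagella--Henriksen) has $U/f\simeq\Cx/\mathbb{Z}$, the doubly infinite cylinder, for doubly parabolic; $U/f\simeq\DD^*$, the punctured disc, for simply parabolic; and a finite-modulus annulus for hyperbolic. The positive-length core geodesic you want to use as a uniform lower bound exists only in the hyperbolic case, and is what yields $\inf_{z}\lim_n>0$ there; the punctured disc has no closed geodesic, which is precisely why the infimum over $z$ is $0$ in the simply parabolic case.

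Third, you assert $\Omega'=\mathbb{H}$ in the simply parabolic case, but the theorem only gives a $T$-invariant absorbing subdomain $\pi(V)\subsetneq\mathbb{H}$ in general. To conclude that the infimum of the asymptotic displacement is $0$ you need that, for $w\in\pi(V)$ of large imaginary part, the hyperbolic density $\rho_{\pi(V)}(w+n)$ approaches $\rho_{\mathbb{H}}(w+n)=1/\operatorname{Im}w$ as $n\to\infty$. This does follow from absorption --- a compactness argument shows that for any fixed $R<\operatorname{Im}w$ the disc $D(w+n,R)$ eventually lies in $\pi(V)$ --- but it is a step that must be argued rather than assumed. With these corrections the proof sketch does recover the statement, and the normal-form approach is indeed the one the cited sources use.
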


Another way of classifying way that will be useful to us is to look at the grand orbit structure:
\begin{lemma}[\cite{FH06}*{Proposition 1}]
Let $U$ be an invariant Baker domain of $f$. Then:
\begin{itemize}
    \item $U$ is doubly parabolic if and only if $U/f\simeq \Cx/\mathbb{Z}$ (a cylinder).
    \item $U$ is simply parabolic if and only if $U/f\simeq \DD^*$.
    \item $U$ is hyperbolic if and only if $U/f \simeq \{z\in\Cx\colon 1 < |z| < r\}$ for some $r > 1$.
\end{itemize}
\end{lemma}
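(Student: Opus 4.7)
The plan is to invoke the Baker--Pommerenke--Cowen normal form for the restriction $f|_U$. Uniformising $U$ by a Riemann map $\varphi\colon \DD\to U$ conjugates $f|_U$ to an inner function $g\colon \DD\to \DD$ whose iterates escape to a boundary Denjoy--Wolff point. Cowen's theorem then produces a Riemann surface $\Omega$, a Möbius transformation $T\colon\Omega\to\Omega$, a holomorphic map $\sigma\colon \DD\to\Omega$ with $\sigma\circ g=T\circ\sigma$, and a simply-connected absorbing set $V\subset\DD$ with $g(V)\subset V$ and $\bigcup_{n\geq 0} g^{-n}(V)=\DD$ on which $\sigma$ is univalent and $\sigma(V)$ is a fundamental set for the cyclic action of $T$ on $\Omega$. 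The model $(\Omega,T)$ takes one of three forms: $(\Cx,\, w\mapsto w+1)$ in the doubly parabolic case, $(\mathbb{H},\, w\mapsto w\pm 1)$ in the simply parabolic case, and $(\mathbb{H},\, w\mapsto \lambda w)$ with $\lambda>1$ in the hyperbolic case. That Cowen's trichotomy agrees with the one from Theorem \ref{thm:BDhypclass} is a classical translation between the asymptotic hyperbolic step of the iterates and the model $(\Omega,T)$.

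Transporting $\sigma$ back to $U$ via $\varphi$, my next step would be to show that the induced map descends to a biholomorphism
\[ \bar\sigma\colon U/f\,\longrightarrow\, \Omega/\langle T\rangle. \]
Well-definedness and one direction of the equivalence of grand orbits are immediate from the semi-conjugacy. For the converse, assume $\sigma(z)$ and $\sigma(w)$ lie in the same $\langle T\rangle$-orbit; applying sufficiently high iterates of $g$ brings both points into the absorbing set $V$, so their $\sigma$-images lie in $\sigma(V)$. Since $\sigma(V)$ is a fundamental set for $\langle T\rangle$, two points in $\sigma(V)$ on the same $T$-orbit must coincide, and univalence of $\sigma|_V$ then pulls this identification back to an equality of forward iterates of $z$ and $w$ under $g$. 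Surjectivity of $\bar\sigma$ follows from the fact that every $T$-orbit meets $\sigma(V)\subset\sigma(\DD)$.

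To finish, I would compute each quotient explicitly: $\Cx/\langle w\mapsto w+1\rangle=\Cx/\mathbb{Z}$ is a cylinder; the map $w\mapsto e^{2\pi iw}$ identifies $\mathbb{H}/\langle w\mapsto w+1\rangle$ with $\DD^*$; and the map $w\mapsto\exp(2\pi i\log w/\log\lambda)$ identifies $\mathbb{H}/\langle w\mapsto\lambda w\rangle$ with an annulus $\{1<|z|<r\}$, where $r=\exp(2\pi^2/\log\lambda)$. The main obstacle is the injectivity of $\bar\sigma$: since $\sigma$ is generally not univalent on all of $\DD$, one cannot compare grand orbits directly, and must exploit Cowen's absorbing set to reduce the comparison to the univalent regime. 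Once this is done, the three conformal types follow mechanically from the three Cowen models.
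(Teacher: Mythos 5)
The paper cites this lemma from Fagella--Henriksen without reproducing a proof, so there is no in-paper argument to compare against; I will evaluate your proposal on its own. The overall strategy---pass to the inner function $g$ on $\DD$, invoke Cowen's linearisation $(\Omega,T,\sigma,V)$, and show that $\sigma$ descends to a biholomorphism $U/f\to\Omega/\langle T\rangle$---is the right one and presumably close to what FH06 do.

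However, your injectivity argument contains a genuine error. You assert that because $\sigma(V)$ is a ``fundamental set'' for $T$, two points of $\sigma(V)$ lying on the same $T$-orbit must coincide. That is the defining property of a \emph{fundamental domain}, not of a fundamental set in Cowen's sense. In Cowen's theorem $\sigma(V)$ is open, $T$-forward-invariant ($T(\sigma(V))\subset\sigma(V)$) and absorbing; in particular, whenever $p\in\sigma(V)$, so is $T^k(p)$ for every $k\geq 0$, and these are all distinct (since $T$ is fixed-point-free on $\Omega$). So $\sigma(V)$ contains an entire forward tail of every $T$-orbit, and the claim as you state it fails. The fix is straightforward and uses only the univalence of $\sigma|_V$: suppose $\sigma(z)=T^k\sigma(w)$ with, say, $k\geq 0$. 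Rewrite the right-hand side via the semi-conjugacy as $T^k\sigma(w)=\sigma(g^k w)$, so that $\sigma(z)=\sigma(g^k w)$. Choose $n$ large enough that $g^n z\in V$ and $g^{n+k}w=g^n(g^k w)\in V$. Applying $T^n$ to the identity $\sigma(z)=\sigma(g^k w)$ gives $\sigma(g^n z)=\sigma(g^{n+k}w)$, and now univalence of $\sigma$ on $V$ yields $g^n z=g^{n+k}w$, i.e.\ $z$ and $w$ have the same grand orbit. The point is that you must synchronise the iterates applied to $z$ and $w$ (using the known relation between $\sigma(z)$ and $\sigma(w)$) \emph{before} invoking univalence; the ``fundamental domain'' shortcut does not exist. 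The rest of your proposal---surjectivity, the identification of the three quotient surfaces via $w\mapsto e^{2\pi i w}$ and $w\mapsto\exp(2\pi i\log w/\log\lambda)$---is correct.
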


\section{Proof of Theorem \ref{thm:escape}}\label{sec:escape}
First, notice that the singular value $v_t$ of $f_t$ converge to a singular value $v\in\Cx$ of $f$, by the fact that the maps $f_t$ form a natural family. Second, we look at the quotient surfaces $S_t := \hat U_t/f_t$, where $\hat U_t = U_t\setminus \GO(S(f_t|_{U_t}))$. Since $U_t$ is an attracting basin, each $S_t$ is a torus with punctures (possibly infinitely many). Since $\rho_t\to 1$ horocyclically, the surfaces $S_t$ converge geometrically (by Lemma \ref{lem:horo}) to $S^*$, a cylinder with punctures (again, possibly infinitely many). This supports evidence, at the quotient level, for $f$ to have a doubly parabolic Baker domain. The challenge now lies in transferring this ``quotient-level'' knowledge back to the actual functions. We accomplish this in three parts. First, we show that there exists an $f$-invariant continuum $C\subset\Cx$ joining $v$ to $\infty$. Second, we show that there exists $c > 0$ such that $V := \{z\in\Cx\colon d(z, C) < c\}\subset U_t$ for all $t$. Finally, we show that the resulting Fatou component $U$ of $f$ that contains $C$ is, in fact, a doubly parabolic Baker domain.

The first part is straightforward. We let $\phi_t\colon U_t\to\Cx$ denote the (global) linearising coordinates of $z_t$, i.e. holomorphic functions satisfying $\phi\circ f_t = \rho_t\cdot\phi$. These functions depend continuously on $t$, and therefore so do $w_t = \phi_t(v_t)$ and the line segment $[w_t, 0]$. By the commuting diagram $\phi_t\circ f_t = \rho_t\cdot\phi$, the curve $\tilde C_t := \phi_t^{-1}([w_t, 0])$ is an $f_t$-invariant real-analytic curve joining $v_t$ to $z_t$. Now, recall that $f_t$ is a natural family, and let $f_t = \psi_t\circ f\circ \varphi_t^{-1}$, where $\psi_t$ and $\varphi_t$ are quasiconformal homeomorphisms depending continuously on $t$ and satisfying $\psi_0 = \varphi_0 = Id$. The curve $C_t = \varphi_t^{-1}(\tilde C_t)$ is mapped by $f$ onto $\psi_t^{-1}(\tilde C_t)$. Since the Hausdorff topology on closed subsets of the Riemann sphere is compact, it is clear that $\tilde C_t$ converges to a continuum $C$, and (since $\psi_t$ and $\varphi_t$ converge to the identity) so do $\varphi_t^{-1}(\tilde C_t)$ and $\psi_t^{-1}(\tilde C_t)$. This continuum is also, by construction, $f$-invariant, and joins $v$ to $\infty$.

The second part is less straightforward, and starts with the following lemma.
\begin{lemma}
For any $z\in C$, either:
\begin{itemize}
    \item There exists $r > 0$ such that $D(z, r)\subset U_t$ for all sufficiently small $t$; or
    \item $z$ is a fixed point of $f$.
\end{itemize}
\end{lemma}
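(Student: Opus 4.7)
The plan is to prove the contrapositive: assuming $z \in C$ is not a fixed point of $f$, I will produce a radius $r > 0$ and some $t_0 > 0$ with $D(z, r) \subset U_t$ for every $t \in (0, t_0)$. The main tool is Lemma \ref{lem:delta} applied in $U_t$ to a pair of points on $\tilde C_t$; the key estimate is a uniform upper bound on their hyperbolic distance in $U_t$.

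Since $z \in C = \lim_t C_t$, choose $\zeta_t \in C_t$ with $\zeta_t \to z$, and set $\tilde\zeta_t \defeq \varphi_t(\zeta_t) \in \tilde C_t$. Because $\varphi_t \to \mathrm{Id}$, still $\tilde\zeta_t \to z$, and $f_t(\tilde\zeta_t) \in \tilde C_t$ converges to $f(z) \neq z$, so
\[
|\tilde\zeta_t - f_t(\tilde\zeta_t)| \longrightarrow |z - f(z)| > 0.
\]
After a harmless perturbation of the linearising parameter $s_t \defeq \phi_t(\tilde\zeta_t)$ to avoid the forward orbit of $\phi_t(v_t)$ under multiplication by $\rho_t$, the subarc of $\tilde C_t$ from $\tilde\zeta_t$ to $f_t(\tilde\zeta_t)$ lies in $\hat U_t$ and projects to a loop in $S_t = \hat U_t/f_t$ representing the generator of the $\mathbb{Z}$-deck group; consequently $d_{\hat U_t}(\tilde\zeta_t, f_t(\tilde\zeta_t))$ equals the translation length of this deck transformation at the image of $\tilde\zeta_t$. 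By Lemma \ref{lem:horo}, the tori $S_t$ converge geometrically to the punctured cylinder $S^*$, and the image of $\tilde\zeta_t$ converges to a non-puncture point (which one may arrange by perturbing $z$ slightly along $C$, since singular orbits form a countable set), so this translation length stays bounded in $t$. Combined with the domain monotonicity $d_{U_t} \leq d_{\hat U_t}$, this gives $d_{U_t}(\tilde\zeta_t, f_t(\tilde\zeta_t)) \leq D$ uniformly.

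Lemma \ref{lem:delta} applied in $U_t$ then yields
\[
\delta_{\widetilde{U_t}}(\tilde\zeta_t) \;\geq\; \frac{|\tilde\zeta_t - f_t(\tilde\zeta_t)|}{2 D\, e^{2D}} \;\geq\; c > 0
\]
for small $t$, where $\widetilde{U_t}$ denotes the topological hull of $U_t$. Hence $D(\tilde\zeta_t, c) \subset \widetilde{U_t}$; upgrading to a disc in $U_t$ itself requires ruling out bounded complementary components of $U_t$ meeting this disc. Any such component is a Fatou component of $f_t$ of uniformly bounded-below diameter, which in the Hausdorff limit would survive to produce a nontrivial bounded Fatou component of $f$ near $z$, contradicting that $z$ is the limit of the interior points $\zeta_t \in U_t$. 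Since $\tilde\zeta_t \to z$, this yields $D(z, r) \subset U_t$ for a uniform $r > 0$ and all sufficiently small $t$. The main obstacle is the middle step, namely extracting a base-pointed uniform bound on the deck-transformation translation length from the geometric convergence of Lemma \ref{lem:horo}; everything else is a direct application of the preliminary estimates.
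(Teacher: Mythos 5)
Your argument follows essentially the same route as the paper's: pick a point on $\tilde C_t$ converging to $z$, use the geometric convergence of the quotient tori $S_t = \hat U_t/f_t$ to the punctured cylinder $S^*$ (Lemma \ref{lem:horo}) to get a uniform upper bound on the hyperbolic displacement $d_{\hat U_t}(\tilde\zeta_t, f_t(\tilde\zeta_t))$, then feed this into Lemma \ref{lem:delta} together with the fact that $|\tilde\zeta_t - f_t(\tilde\zeta_t)| \to |z - f(z)| > 0$ to extract a uniform lower bound on the boundary distance. The paper phrases the displacement bound via the identity $d_{\hat U_t}(w, f_t(w)) = 2\inj_{S_t}(\pi_t(w))$ rather than ``translation length of the generator of the $\mathbb{Z}$-deck group,'' but these are the same fact. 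Your extra care about perturbing to avoid the grand orbit of the singular values (so that the point actually lies in $\hat U_t$) is a legitimate technicality that the paper's proof handles only implicitly.

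One concrete flaw: in the final step you try to upgrade $D(\tilde\zeta_t, c) \subset \widetilde{U_t}$ (the topological hull) to $D(\tilde\zeta_t, c) \subset U_t$ by asserting that any bounded complementary component of $U_t$ meeting this disc ``is a Fatou component of $f_t$ of uniformly bounded-below diameter.'' That is not correct: a bounded complementary component of a Fatou component is a compact subset of $\Cx$ that contains a piece of the Julia set and possibly entire other Fatou components plus their boundaries, but is not itself a Fatou component, and nothing in your setup forces its diameter to be bounded below (indeed it must fit inside $D(\tilde\zeta_t, c)$, so if anything it is small). For what it is worth, the paper's proof sidesteps this entirely by simply writing $\delta_{U_t}$ for the distance to $\partial U_t$ when invoking Lemma \ref{lem:delta}, even though the lemma's conclusion is stated with $\delta_{\tilde\Omega}$; so the paper implicitly treats the hull distinction as harmless rather than arguing it away. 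Your instinct to address the point explicitly is good, but the specific argument you give for it does not work.
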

\begin{proof}
We recall the surfaces $S_t := \hat U_t/f_t$. If we let $\pi_t$ denote the projection from $\hat U_t$ to $S_t$, we have that $d_{\hat U_t}(z, f_t(z)) = 2\inj_{\hat U_t}(\pi_t(z))$; in particular, this is true for $z\in \tilde C_t\cap \hat U_t$. Since $S_t\to S^*$ geometrically, we have that $\inj_{\hat U_t}(\pi_t(z))$ converges to a non-zero real number (say $\alpha$). Now, Lemma \ref{lem:delta} says that
\[ |f_t(z) - z| \leq 2d_{\hat U_t}(z, f_t(z))e^{2d_{\hat U_t}(z, f_t(z))}\delta_{U_t}(z), \]
where $\delta_{U_t}(z)$ denotes the distance from $z$ to $\partial U_t$. Since the left-hand side converges to $|f(z) - z|$, and $d_{\hat U_t}(z, f_t(z))$ is uniformly bounded, we have that $\delta_{U_t}(z)$ has a uniform (in $t$) lower bound whenever $z$ is not a fixed point of $f$. Our claim follows.
\end{proof}

Now, the possibility of fixed points of $f$ on $C$ poses a challenge; let us see what this would mean for $f$. If $z\in C$ satisfies $f(z) = z$, this is necessarily an isolated fixed point of $f$, meaning that it lies at the end of a subcontinuum $C'\subset C$ that does lie in the Fatou set of $U_t$ for small $t$. We claim that these points are in fact in the Fatou set of $f$; indeed, if $C'$ intersects $J(f)$, then there are repelling periodic points of $f$ arbitrarily closed to $C'$. By analytic continuation of the repelling periodic points, there exist periodic points of $f_t$ for small $t$ that are also close to $C'$, and so intersect $U_t$; this is a contradiction.

So, if $C'\subset F(f)$ and $C$ is invariant, it must belong to either an invariant Fatou component $V$ or a 2-cycle of Fatou components $\{V_1, V_2\}$ ``attached'' at $z$. However, the convergence of the quotient surfaces $S_t$ to a punctured cylinder tells us that $V$ (or the cycle $\{V_1, V_2\}$) must come from an attracting fixed point in $U_t$ whose multiplier tends to one (or $-1$). However, the only ``available'' fixed point is $z_t$, which is escaping to infinity as $t\searrow 0$. This is a contradiction, and so $C$ cannot contain any fixed points of $f$.

Finally, the third step starts with realising that $C$ belongs to a Fatou component $U$ of $f$; this can again be accomplished by arguing by contradiction, using the analytic continuation of the repelling periodic points of $f$. Next, we must reassure ourselves that $U$ has a discrete grand orbit relation, whence our theorem will follow: the quotient surface $U/f$ will be realised as the limit of the surfaces $U_t/f_t$, which (as discussed above) is a cylinder with possibly infinitely many punctures. The only Fatou components satisfying this are parabolic basins and doubly parabolic Baker domains, and $U$ cannot be parabolic since the fixed point $z_t$ escaped to infinity.

Now, to see that the grand orbit relation in $U$ is discrete. The only Fatou components of entire functions with indiscrete grand orbit relations are superattracting basins, Siegel discs, and some kinds of wandering domains (see \cite{EFGP24}). The invariance of the continuum $C\subset U$ precludes both wandering domains and Siegel discs, and $U$ cannot be a superattracting basin because the superattracting fixed point would have to lie on $C$, leading to a contradiction with the attracting dynamics of $\tilde C_t \subset U_t$.

\section{Proof of Theorem \ref{thm:perturb}}\label{sec:perturb}
To prove Theorem \ref{thm:perturb}, we must first take a closer look at the behaviour of $f$ inside its Baker domain. We have:
\begin{lemma}\label{lem:derivative}
Let $f$ be a transcendental entire function with a doubly parabolic invariant Baker domain $U$. Let $\gamma\colon[0, +\infty)\to U$ be such that $\gamma(t)\to\infty$ as $t\to\infty$ and $f(\gamma)\subset\gamma$. Then, $|f'(\gamma(t))|\to 1$ as $t\to\infty$.
\end{lemma}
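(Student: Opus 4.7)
The plan is to split the estimate $|f'(\gamma(t))|\to 1$ into an upper bound, which will follow quickly from the Schwarz--Pick inequality, and a lower bound, which is the technical heart of the argument. The first step I would take is to establish the asymptotic $d_U(\gamma(t), f(\gamma(t)))\to 0$ as $t\to\infty$. By Theorem \ref{thm:BDhypclass}, $d_U(f^n(z), f^{n+1}(z))\to 0$ pointwise for each $z\in U$; the Schwarz--Pick inequality shows this sequence is monotone non-increasing in $n$, so Dini's theorem upgrades pointwise convergence to uniform convergence on any compact $K\subset U$. Since $f(\gamma)\subset\gamma$, one may choose $0 = t_0 < t_1 < t_2 < \ldots$ satisfying $f(\gamma(t_n)) = \gamma(t_{n+1})$, so that forward invariance tiles the tail of $\gamma$ as $\gamma|_{[t_n, t_{n+1}]} = f^n(\gamma|_{[0, t_1]})$. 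Setting $K := \gamma([0, t_1])$ (compact in $U$), each $\gamma(t)$ with $t\in[t_n, t_{n+1}]$ is of the form $f^n(w)$ for some $w\in K$; uniform convergence on $K$ then gives $d_U(\gamma(t), f(\gamma(t))) = d_U(f^n(w), f^{n+1}(w))\to 0$.

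Since Baker domains of transcendental entire functions are simply connected, Lemma \ref{lem:harnack} applies to $U$ and combines with the previous step to yield $\rho_U(\gamma(t))/\rho_U(f(\gamma(t)))\to 1$. The upper bound $|f'(\gamma(t))|\leq 1 + o(1)$ then follows immediately: Schwarz--Pick gives $|f'(\gamma(t))|\rho_U(f(\gamma(t)))\leq \rho_U(\gamma(t))$.

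The hard part will be the matching lower bound $|f'(\gamma(t))|\geq 1 - o(1)$, since Schwarz--Pick is only a one-sided inequality. My approach would be to study the hyperbolic derivative $\norm{Df}_U(z) := |f'(z)|\rho_U(f(z))/\rho_U(z)\in[0, 1]$ and show $\norm{Df}_U(\gamma(t))\to 1$; combined with the density ratio convergence this delivers $|f'(\gamma(t))|\to 1$. The hyperbolic lengths $L_n := \ell_U(\gamma|_{[t_n, t_{n+1}]})$ are non-increasing by Schwarz--Pick and hence converge, so the telescoping identity $L_n - L_{n+1} = \int_{\gamma|_{[t_n, t_{n+1}]}}(1 - \norm{Df}_U)\,ds_U\to 0$ shows $\norm{Df}_U\to 1$ in an integrated sense along $\gamma$. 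To upgrade this to pointwise convergence at $\gamma(t)$ I would invoke Cowen's normal form for doubly parabolic Baker domains---a holomorphic semi-conjugacy $\phi\colon U\to\Cx$ with $\phi\circ f = T\circ\phi$ and $T(z) = z+1$---which gives the identity $f'(z) = \phi'(z)/\phi'(f(z))$; an oscillation estimate on $\log|\phi'|$ along $\gamma$ in the spirit of Lemma \ref{lem:harnack}, combined with $d_U(\gamma(t), f(\gamma(t)))\to 0$ from the first step, then yields $|\phi'(\gamma(t))|/|\phi'(f(\gamma(t)))|\to 1$ as required.
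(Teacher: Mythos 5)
Your overall decomposition---write $|f'(\gamma(t))|$ as the hyperbolic distortion $\norm{Df}_U(\gamma(t))$ divided by the density ratio $\rho_U(f(\gamma(t)))/\rho_U(\gamma(t))$, show the latter tends to $1$ via $d_U(\gamma(t),f(\gamma(t)))\to 0$ and Lemma~\ref{lem:harnack}, and reduce the problem to showing $\norm{Df}_U(\gamma(t))\to 1$---is exactly the paper's. Your preliminary step upgrading the orbitwise statement $d_U(f^n(z),f^{n+1}(z))\to 0$ of Theorem~\ref{thm:BDhypclass} to the curve statement $d_U(\gamma(t),f(\gamma(t)))\to 0$, via Schwarz--Pick monotonicity, Dini's theorem on $K=\gamma([0,t_1])$, and the tiling $\gamma|_{[t_n,t_{n+1}]}=f^n(\gamma|_{[0,t_1]})$, is correct and is a nice explicit justification of something the paper uses without comment.

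The gap is in the proof that $\norm{Df}_U(\gamma(t))\to 1$. Your telescoping identity
\[
L_n - L_{n+1} = \int_{\gamma|_{[t_n,t_{n+1}]}} \bigl(1-\norm{Df}_U\bigr)\,ds_U \to 0
\]
is correct and gives an integrated (``$L^1$'') decay of $1-\norm{Df}_U$ along the tiles; but, as you acknowledge, this does not by itself give pointwise convergence, in particular because the hyperbolic lengths $L_n$ need not stay bounded away from zero (for a doubly parabolic domain they typically shrink). The upgrade you propose---invoke Cowen's semi-conjugacy $\phi$ with $\phi\circ f=\phi+1$, deduce $f'=\phi'/\phi'\circ f$, and then use ``an oscillation estimate on $\log|\phi'|$ in the spirit of Lemma~\ref{lem:harnack}''---is not justified. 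Lemma~\ref{lem:harnack} is a Harnack-type inequality specifically for the hyperbolic density $\rho_\Omega$; there is no analogous a priori bound controlling the oscillation of $\log|\phi'|$ by the hyperbolic distance $d_U$. The Fatou/Abel coordinate $\phi$ is a semi-conjugacy, not injective on $U$, so Koebe-type bounds for univalent functions do not apply on $U$ (they do apply on Cowen's ``absorbing'' subdomain $V$ where $\phi$ is univalent, but there the relevant metric is $d_V\geq d_U$, and $d_V(\gamma(t),f(\gamma(t)))$ need not tend to $0$). So this step, which is the technical crux of the lemma, is left unproved.

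The paper avoids all of this by passing to a Riemann map $\varphi:\DD\to U$ conjugating $f$ to an inner function $g$, then using Cowen's classification to identify the doubly parabolic case with the angular derivative $|g'(p)|=1$ at the Denjoy--Wolff point $p$. Since the lifted curve $\tilde\gamma$ lands non-tangentially at $p$, the angular derivative gives $|g'(\tilde\gamma(t))|\to 1$ directly, and combined with the density-ratio estimate in $\DD$ (again via $d_\DD(\tilde\gamma(t),g(\tilde\gamma(t)))\to 0$ and Lemma~\ref{lem:harnack}) this gives $\norm{Dg}_\DD(\tilde\gamma(t))\to 1$, hence $\norm{Df}_U(\gamma(t))\to 1$ by conformal invariance. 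I would recommend replacing your Abel-coordinate argument by this inner-function/angular-derivative argument, which is both shorter and actually closes the lower bound.
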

\begin{proof}
First, we look at the hyperbolic distortion of $f$ along $\gamma$:
\[ \|Df(\gamma(t))\|_U^U = \frac{|f'(\gamma(t))|\rho_U(f(\gamma(t)))}{\rho_U(\gamma(t))}; \]
we will show that both the hyperbolic distortion on the left-hand side and the quotient $\rho_U(f(\gamma(t)))/\rho_U(\gamma(t))$ tend to one as $t\to\infty$.

For the former, let $\varphi\colon\DD\to U$ be a Riemann map conjugating $f\colon U\to U$ to the inner function $g\colon\DD\to\DD$. The curve $\gamma$ is conjugated to a $g$-invariant curve $\tilde\gamma$ landing non-tangentially at the Denjoy--Wolff point $p$ of $g$. Since $U$ is a doubly parabolic Baker domain, Cowen's classification of the dynamics of inner functions tells us that $|g'(p)| = 1$ in the sense of angular derivatives; thus, in the expression
\[ \|Dg(\tilde\gamma(t))\|_\DD^\DD = \frac{|g'(\tilde\gamma(t))|\rho_\DD(g(\tilde\gamma(t)))}{\rho_\DD(\tilde\gamma(t))}, \]
we know that at least one of the terms on the right-hand side tends to one as $t\to\infty$.

For the remaining quotient, we recall that, since $U$ is doubly parabolic and $\varphi$ is a hyperbolic isometry, $d_\DD(\tilde\gamma(t), g(\tilde\gamma(t)))\to 0$ as $t\to\infty$ by Theorem \ref{thm:BDhypclass}; using Lemma \ref{lem:harnack}, we conclude that $\rho_\DD(g(\gamma(t)))/\rho_\DD(\gamma(t))\to 1$ as $t\to\infty$. Therefore, $\|Dg(\tilde\gamma(t))\|_\DD^\DD\to 1$ as $t\to\infty$, and so by the Schwarz--Pick lemma the same holds for $\|Df(\gamma(t))\|_U^U$.

To conclude the proof, we use once again the fact that $U$ is doubly parabolic, whence $d_U(f(\gamma(t)), \gamma(t))\to 0$ as $t\to\infty$, and apply Lemma \ref{lem:harnack}.
\end{proof}

We continue the proof of Theorem \ref{thm:perturb} by defining the functions
\[ h_t(z) = \frac{\gamma(1/t)}{f(\gamma(1/t))}f(z), t\in(0, 1/2]; \]
since $f(\gamma)\subset\gamma$, and $\gamma$ can be chosen to avoid the origin, these are entire functions varying continuously in $M_f$. The point $z_t = \gamma(1/t)$ is clearly fixed, and escapes the domain as $t\searrow 0$ by definition. To see that $h_t\to f$ as $t\searrow 0$, we notice that, by Lemma \ref{lem:derivative} and L'Hopital's rule, the quotient $\gamma(s)/f(\gamma(s))$ tends to one in modulus as $s\to\infty$.

Next the multiplier of $z_t$ is
\[ h_t'(z_t) = \frac{\gamma(1/t)}{f(\gamma(1/t))}f'(\gamma(1/t)); \]
by Lemma \ref{lem:derivative} and the previous observation, this tends to one in modulus as $t\searrow 0$. We are left to show that the fixed point $z_t$ is attracting.

To that end, consider the real-valued function $g_t(s) = |h_t(\gamma(s))|$; the fixed point $z_t = \gamma(1/t)$ becomes a fixed point $s_t = 1/t$, $t\in (0, 1/2]$, of $g_t$. Assuming that $|g_t'(s_t)| > 1$, we will find another fixed point that also escapes to infinity and is attracting. Asymptotically, we have
\[ \text{$\frac{g_t(s)}{s} \to \frac{|z_t|}{|f(z_t)|} < 1$ as $s\to+\infty$,} \]
so that if $|g_t'(s_t)| > 1$, we must by continuity have another fixed point $x_t > s_t$ of $g_t$, which must be non-repelling. The corresponding point $w_t = \gamma(x_t)$ is not necessarily a fixed point of $h_t$, but it does satisfy $|w_t| = |h_t(w_t)|$, so we once again modify our function to 
\[ \tilde h_t(z) = \frac{w_t}{h_t(w_t)}h_t(z) = \frac{\gamma(x_t)}{f(\gamma(x_t))}f(z). \]
The same argument as before shows that $\tilde h_t(z)\to f$ as $t\searrow 0$, that $w_t$ is a fixed point of $\tilde h_t$, and that $w_t\to\infty$ as $t\searrow 0$. Furthermore, we have this time that $|\tilde h_t'(w_t)| \leq 1$.

If it turns out that $|\tilde h_t'(w_t)| = 1$, we can further modify the function by defining $f_t(z) = (1 - \epsilon_t)(\tilde h_t(z) - w_t) + w_t$, where $\epsilon_t\in (0, 1)$ is chosen so that $\epsilon_t\cdot w_t\to 0$ as $t\searrow 0$. This new function is entire, belongs to $M_f$, and still fixes $w_t$; the multiplier of $w_t$ has modulus $(1 - \epsilon_t)|\tilde h_t'(w_t)| < 1$, so that $w_t$ is an attracting fixed point of $f_t$. Furthermore, by our choice of $\epsilon_t$, we have $f_t\to f$ locally uniformly as $t\searrow 0$. This finishes the proof.

\section{Natural families with bounded partial derivatives}\label{sec:uniform}
We prove Proposition \ref{prop:uc}.
\begin{proof}[Proof of Proposition \ref{prop:uc}]
A $p$-periodic orbit of $f_\lambda$ is defined by the equation
\[ f_\lambda^p(z_\lambda) = z_\lambda; \]
differentiating with respect to $\lambda$ yields
\[ \frac{\partial f_\lambda^p}{\partial\lambda}(z_\lambda) + (f_\lambda^p)'(z_\lambda)\frac{\partial z_\lambda}{\partial \lambda} = \frac{\partial z_\lambda}{\partial\lambda}. \]
The term $(f_\lambda^p)'(z_\lambda)$ is, by definition, the multiplier $\rho(z_\lambda)$, so we can rearrange the expression above into
\begin{equation}\label{eq:multiplier}
\frac{\partial z_\lambda}{\partial\lambda} = \frac{1}{1 - \rho(z_\lambda)}\frac{\partial f_\lambda^p}{\partial\lambda}(z_\lambda).
\end{equation}
Now, if there exists some path $t\mapsto \lambda(t)$, $t\in[0, 1]$, along which $z_{\lambda(t)}$ disappears to infinity while $t\nearrow 1$, the left-hand side of (\ref{eq:multiplier}) tends to infinity, and therefore so does the right-hand side. However, the partial derivative of $f_\lambda$ with respect to $\lambda$ is uniformly bounded, so we must have $\rho(z_{\lambda(t)})\to 1$.
\end{proof}

\bibliographystyle{amsalpha}
\bibliography{ref}

\end{document}